\newcommand{\card}{\mathrm{card}\,}
\newtheorem{theorem}{Theorem}
\newtheorem{proposition}[theorem]{Proposition}
\newcommand{\T}{\mathbb{T}}
\newcommand{\D}{\mathbb{D}}
\DeclareMathOperator{\clos}{clos}
\newcommand{\WEP}{{\text{\rm WEP}}}
\newcommand{\ZT}{{Z
(\Theta)}}
\newcommand{\ZB}{{Z
(B)}}
\begin{document}
\title[Generalized Carleson--Newman inner functions]{Generalized Carleson--Newman inner functions}
\author[Alexander Borichev]{Alexander Borichev}
\address{LATP\\CMI\\Aix--Marseille Universit\'e\\
39, rue F. Joliot-Curie\\13453 Marseille\\France}
\email{borichev@cmi.univ-mrs.fr}
\keywords{Inner functions, Carleson--Newman Blaschke products, weak embedding property}
\subjclass[2000]{Primary 30H05; Secondary 30J10, 30J05, 46J15}

\begin{abstract} We study a class of inner functions introduced by 
Gorkin, Mortini, and Nikolski, and motivated by Banach algebras and functional calculus applications. Answering their question, we produce a singular function that cannot be multiplied into this generalized Carleson--Newman 
class of inner functions (the class of functions satisfying the so called 
weak embedding property). Furthermore, we give elementary proofs for some results on these classes obtained earlier using arguments related to 
the Gelfand representation of $H^\infty$.
\end{abstract}
\thanks{Research partially supported by the ANR grant FRAB}

\maketitle

'\section{Introduction}

Given an inner function $\Theta$ in the unit disc $\D$, denote by 
$\ZT$ its zero set in $\D$. The pseudo-hyperbolic distance 
in the unit disc is defined as  
$$
\rho(z,w)=\frac{|z-w|}{|1-\bar z w|}.
$$
V.~Vasyunin proved in \cite{VA} that a Blaschke product $B$ is interpolating if and only if for some $c>0$ we have
$$
|B(\lambda)|\ge c\,\rho(\lambda,\ZB),\qquad \lambda\in\D,
$$
where 
$$
\rho(\lambda,\ZT)=\inf\bigl\{\rho(\lambda,w):w\in \ZT\bigr\}.
$$
For interpolating Blaschke products and other notions related to the function theory in the unit disc
see, for instance, \cite{G}.

Furthermore, P.~Gorkin and R.~Mortini \cite{GM} proved that a Blaschke 
product $B$ is a product of $N$ interpolating Blaschke products 
(is a Carleson--Newman Blaschke product) if and only if 
for some $c>0$ we have
$$
|B(\lambda)|\ge c\,\rho(\lambda,\ZB)^N,\qquad \lambda\in\D.
$$

Recently, P.~Gorkin, R.~Mortini, and N.~Nikolski \cite{GMN} 
introduced an interesting class of inner functions determined by an asymptotic condition on its level sets. Let us say that an inner function 
$\Theta$ 
is {\it a generalized Carleson--Newman inner function}
($\Theta$ satisfies the so called weak embedding property ($\WEP$), a 
weakening of the well-known Carleson embedding property)
if and only if for every $\varepsilon>0$ there exists $\delta>0$ 
such that
$$
|\Theta(\lambda)|<\delta \implies \rho(\lambda,\ZT)<
\varepsilon,\qquad \lambda\in\D.
$$

It is proved in \cite{GMN} that $\Theta$ is a generalized Carleson--Newman inner function if and only if the space 
$H^\infty/\Theta H^\infty$ satisfies the norm controlled inversion property: 
$$
\sup\bigl\{ \|1/f\|_{H^\infty/\Theta H^\infty}:f\in H^\infty,
\|f|\ZT\|_{\infty}\ge \varepsilon \bigr\}<\infty,\qquad 
\varepsilon>0.
$$
An equivalent property is formulated in terms of (the spectral properties of) the model operator 
$M_\Theta$ on $K_\Theta=H^2\ominus \Theta H^2$, 
$M_\Theta f=P_{K_\Theta}zf$.
Namely, for every $f\in H^\infty$, 
the operator $f(M_\Theta)$ is invertible if and only if the eigenvalues 
$f(\lambda)$, $\lambda\in \ZT$, of $f(M_\Theta)$ are bounded away from zero. 
For one more equivalent property we consider the space $\mathfrak M(H^\infty)$ of the maximal ideals of $H^\infty$. Denote by $Z_{\mathfrak M(H^\infty)}(\Theta)$ the zero set of $\Theta$ in $\mathfrak M(H^\infty)$.
Then $\Theta$ is a generalized Carleson--Newman inner function if and only if $Z_{\mathfrak M(H^\infty)}(\Theta)=\clos_{\mathfrak M(H^\infty)}\ZT$. 

Example~3.7 in \cite{GMN} shows that there 
exist generalized Carleson--New\-man Blaschke products  
which are not Carleson--Newman Blaschke products. For more information on generalized Carleson--Newman inner functions see \cite{GMN,GM,M,VN}. Still, not enough is known about the geometric characteristics of Blaschke sequences $\Lambda$ and about measures $\nu$ on $\T$ such that $B_\Lambda$, 
$B_\Lambda S_\nu$ are generalized Carleson--Newman inner functions. 

The aim of this note is to give more information about such inner functions. In particular, we answer (in the negative) Question~4 at the end of \cite{GMN}: there are inner functions $I$ that do not divide 
generalized Carleson--Newman inner functions.
Furthermore, we give new proofs for some facts about 
generalized Carleson--Newman inner functions. These proofs are more elementary than those in \cite{GM,M}: they do not use notions and results pertaining to the space of the maximal ideals of $H^\infty$.
\smallskip

{\bf Acknowledgements.}
The author is grateful to Raymond Mortini and Nikolai Nikolski for helpful discussions and comments.

'\section{Main results}

We say that a function $U\in H^\infty(\mathbb D)$ 
is a generalized Carleson--Newman function 
(or satisfies the $\WEP$ condition) if for some increasing function $\psi:(0,1)\to (0,1)$, 
\begin{equation}
|U(z)|\ge \psi(\rho(z,Z(U)),\qquad z\in\mathbb D.
\label{xda}
\end{equation}

We start with the following result proved in \cite[Theorem~3.6]{GM} using results concerning 
the space of the maximal ideals of $H^\infty$. 

\begin{proposition}\label{P1} 
Let $B$ be a Blaschke product with the zero set 
$\Lambda$,
\begin{equation}
|B(z)|\ge A\,\rho^n(z,\Lambda),\qquad z\in\mathbb D.
\label{1}
\end{equation}
Then $B$ is the product of $n$ interpolating Blaschke products.
\end{proposition}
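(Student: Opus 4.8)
The plan is to argue by induction on $n$, with base case $n=1$. That case is precisely Vasyunin's theorem quoted above: the bound $|B(z)|\ge A\,\rho(z,\Lambda)$ says exactly that $B$ is interpolating. For the inductive step I would peel a single interpolating Blaschke product off $B$ and check that the quotient again satisfies a bound of the form \eqref{1} with $n$ replaced by $n-1$. Concretely, fix a small $\varepsilon>0$ and let $\Lambda_1\subseteq\Lambda$ be a maximal $\varepsilon$-separated subsequence, so that $\rho(\lambda,\lambda')\ge\varepsilon$ for distinct $\lambda,\lambda'\in\Lambda_1$ while every point of $\Lambda\setminus\Lambda_1$ lies within pseudo-hyperbolic distance $\varepsilon$ of $\Lambda_1$. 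Writing $B=B_1B'$ with $B_1=B_{\Lambda_1}$ and $B'=B_{\Lambda\setminus\Lambda_1}$, the goal is to show that $\Lambda_1$ is interpolating and that $|B'(z)|\ge A'\,\rho(z,\Lambda\setminus\Lambda_1)^{\,n-1}$.

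Two auxiliary facts should drive the argument. First, I would show that \eqref{1} forces the measure $\mu_\Lambda=\sum_{\lambda\in\Lambda}(1-|\lambda|^2)\,\delta_\lambda$ to be a Carleson measure (probing argument: if some Carleson box carried too much mass, evaluating $B$ at the center of its top half would make $-\log|B|$ large while $\rho(\cdot,\Lambda)$ stays comparable to the side length, contradicting \eqref{1}). Granting this, every $\varepsilon$-separated subsequence, and in particular $\Lambda_1$, is interpolating by the separated-plus-Carleson reformulation of Carleson's interpolation theorem (see \cite{G}). Second, I would extract from \eqref{1} a uniform bound on clustering: testing \eqref{1} at a point $z$ sitting at pseudo-hyperbolic distance comparable to $\eta$ from a tight cluster of $k$ zeros of diameter $\eta$ yields $\eta^{k}\gtrsim|B(z)|\ge A\,\eta^{n}$, so that $k\le n$ once $\eta$ is small enough. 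Thus at most $n$ zeros of $\Lambda$ can pile up at any sufficiently fine scale.

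With these in hand the inductive step reduces to comparing $|B'(z)|=|B(z)|/|B_1(z)|$ with $\rho(z,\Lambda\setminus\Lambda_1)^{\,n-1}$ uniformly in $z$, and the clustering bound is exactly what should permit trading one factor of $\rho$: removing one representative from each cluster lowers the local multiplicity from $n$ to $n-1$. The main obstacle I anticipate is this comparison in the delicate region where $z$ is close to $\Lambda_1$. There $|B_1(z)|$ is small, but $\rho(z,\Lambda\setminus\Lambda_1)$ may be small as well, and one must check that the decay of $|B_1(z)|$ is dominated by the correct power of $\rho(z,\Lambda\setminus\Lambda_1)$. Handling this cleanly seems to require organizing the zeros by scale through a Whitney (or tree) decomposition of $\D$, so that the ``at most $n$ per scale'' bound can be applied simultaneously across nested clusters rather than to a single isolated one. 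Once the quotient bound with exponent $n-1$ is secured, the induction hypothesis presents $B'$ as a product of $n-1$ interpolating Blaschke products, and together with $B_1$ this exhibits $B$ as a product of $n$ interpolating Blaschke products.
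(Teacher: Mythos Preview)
Your two auxiliary facts---the Carleson-measure property of $\mu_\Lambda$ and the local multiplicity bound ``at most $n$ zeros in any sufficiently small pseudo-hyperbolic ball''---are exactly the right ingredients, and the paper establishes essentially the same pair (its part (a) gives a uniform bound on $\card[\Lambda\cap D(z,1/2)]$, and part (b) refines this to $\card[\Lambda\cap D(z,\delta)]\le n$ for small $\delta$). Where your route diverges is in how these ingredients are assembled. The paper does \emph{not} induct on $n$ or peel off one interpolating factor at a time. Instead, once the bound $\card[\Lambda\cap D(z,\delta)]\le n$ is in hand, it builds a graph on $\Lambda$ whose edges join points at pseudo-hyperbolic distance below $\delta/(2n)$; each connected component then has at most $n$ vertices, so one can colour $\Lambda$ with $n$ colours, one point of each component per colour. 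Each colour class $\Lambda_j$ is then shown to be interpolating directly, by choosing for each $z\in\Lambda_j$ a circle $\partial D(z,\delta_1)$ with $\delta_1\in(\delta/(6n),\delta/(3n))$ that avoids $\Lambda$ by at least $\delta/(12n^2)$ (possible since fewer than $n$ points of $\Lambda$ lie in the annulus), applying \eqref{1} on that circle, and reading off the Carleson condition $|B_j'(z)|(1-|z|^2)\ge c(A,n)$.

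Your inductive scheme, by contrast, hinges on the quotient estimate $|B'(z)|\ge A'\,\rho(z,\Lambda\setminus\Lambda_1)^{\,n-1}$, which you correctly flag as the delicate point and do not prove. This step is genuinely nontrivial: near a point of $\Lambda_1$ you must show that dividing out a single Blaschke factor lowers the exponent by exactly one, uniformly, and your maximal $\varepsilon$-separated $\Lambda_1$ need not pick exactly one representative from each tight cluster when clusters occur at several nested scales. The Whitney/tree organisation you gesture at may eventually close this, but as written the proposal leaves the core estimate open. The paper's graph-colouring split sidesteps the difficulty entirely: it never needs to control a quotient Blaschke product, only to verify the Carleson condition for each colour class, and for that the original hypothesis \eqref{1} on $B$ itself suffices.
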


In the next section we give a more elementary proof of this statement.

Furthermore, we formulate one more result established in 
\cite[Theorem~5.1]{GM} (they also cite D.~Suarez), \cite[Theorem~4.1]{M}
using the space of the maximal ideals of $H^\infty$.

Denote
$$
\varphi_w(z)=\frac{w-z}{1-z\bar w}.
$$

\begin{proposition}\label{P2} Let $U$ be a generalized Carleson--Newman inner function, and let
$\psi$ satisfy \eqref{xda}.
If $0<|\gamma|<\sup_{t<1}\psi(t)$, then
$U_\gamma=\varphi_\gamma(U)$ is a Carleson--Newman Blaschke product.
\end{proposition}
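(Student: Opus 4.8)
The plan is to establish, for a suitable integer $N$ and a constant $c>0$, the Gorkin--Mortini type lower bound $|U_\gamma(z)|\ge c\,\rho(z,Z(U_\gamma))^N$ on $\D$, together with the fact that $U_\gamma$ carries no singular factor; Proposition~\ref{P1} then exhibits $U_\gamma$ as a product of $N$ interpolating Blaschke products. Using the strict inequality $|\gamma|<\sup_{t<1}\psi(t)$ I first fix $t_0<1$ with $\psi(t_0)>|\gamma|$ and set $\rho^*=(1+t_0)/2\in(t_0,1)$. Two elementary consequences of \eqref{xda} will be used throughout. First, if $\rho(z,Z(U))\ge t_0$ then $|U(z)|\ge\psi(t_0)>|\gamma|$, so $|U_\gamma(z)|=\rho(U(z),\gamma)$ is bounded below by a positive constant $c_1$; this disposes of all $z$ outside the $t_0$-neighborhood of $Z(U)$. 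Second, every zero $\zeta$ of $U_\gamma$ (i.e.\ every $\gamma$-point of $U$) satisfies $\psi(\rho(\zeta,Z(U)))\le|U(\zeta)|=|\gamma|<\psi(t_0)$, hence $\rho(\zeta,Z(U))<t_0$: the $\gamma$-points cluster within pseudohyperbolic distance $t_0$ of $Z(U)$.

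The engine is the Jensen--Nevanlinna factorization of the inner function $U_\gamma=B_{U_\gamma}S_\mu$ evaluated at a zero $\zeta_0$ of $U$. Since $U(\zeta_0)=0$ gives $U_\gamma(\zeta_0)=\varphi_\gamma(0)=\gamma$, one gets the exact balance
$$
\log\frac1{|\gamma|}=\sum_{\zeta\in Z(U_\gamma)}\log\frac1{\rho(\zeta,\zeta_0)}+P\mu(\zeta_0),
$$
valid at every zero $\zeta_0$ of $U$, where $P\mu$ is the Poisson integral of the singular measure $\mu$ of $U_\gamma$ and both summands are nonnegative. I would first deduce that $U_\gamma$ is a Blaschke product. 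If $\mu\ne0$, choose $x_0$ where the symmetric derivative of $\mu$ is infinite; then $|U_\gamma(rx_0)|\to0$, i.e.\ $U(rx_0)\to\gamma$, so eventually $|U(rx_0)|<\psi(t_0)$ and the second consequence yields zeros $w$ of $U$ with $\rho(rx_0,w)<t_0$. The invariant Harnack inequality for the positive harmonic function $P\mu$ across a bounded pseudohyperbolic distance gives $P\mu(w)\ge c(t_0)\,P\mu(rx_0)\to\infty$, contradicting $P\mu(w)\le\log(1/|\gamma|)$ from the balance. Hence $\mu=0$.

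With $\mu=0$ the balance becomes $\sum_{\zeta\in Z(U_\gamma)}\log(1/\rho(\zeta,\zeta_0))=\log(1/|\gamma|)$, so the number of $\gamma$-points with $\rho(\zeta,\zeta_0)\le\rho^*$ is at most $N:=\lceil\log(1/|\gamma|)/\log(1/\rho^*)\rceil$. To bound $|U_\gamma(z)|$ from below for $z$ with $\rho(z,Z(U))<t_0$, let $\zeta_0$ be the nearest zero of $U$ and write $\log(1/|U_\gamma(z)|)=\sum_{\zeta}\log(1/\rho(\zeta,z))$. I split the sum into the at most $N$ ``near'' zeros (those with $\rho(\zeta,\zeta_0)\le\rho^*$, each contributing at most $\log(1/\rho(z,Z(U_\gamma)))$) and the ``far'' zeros: for the latter, $z$ and $\zeta_0$ lie in the ball $\{\rho(\cdot,\zeta_0)<\rho^*\}$ on which $w\mapsto\log(1/\rho(w,\zeta))$ is positive harmonic, so Harnack gives $\log(1/\rho(z,\zeta))\le C\log(1/\rho(\zeta_0,\zeta))$, and summing via the balance bounds their total by $C\log(1/|\gamma|)$. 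This yields $|U_\gamma(z)|\ge|\gamma|^{C}\rho(z,Z(U_\gamma))^N$, which with the constant $c_1$ from the easy range gives the global bound, and Proposition~\ref{P1} concludes.

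The main obstacle is the uniform local count $N$ of $\gamma$-points: it is exactly here that the hypothesis $|\gamma|<\sup_{t<1}\psi(t)$ is indispensable, since it keeps the anchoring zeros $\zeta_0$, and with them the entire estimate, at pseudohyperbolic distance $t_0<1$ from the boundary. Were $|\gamma|=\sup\psi$ permitted, one could only take $t_0\to1$, and both $N$ and the Harnack constants would blow up; the remaining points (the ``far'' comparison and the Poisson--Harnack step) are routine.
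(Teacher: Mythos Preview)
Your argument is correct and follows the same overall strategy as the paper: anchor at a nearby zero $\zeta_0\in Z(U)$ where $|U_\gamma(\zeta_0)|=|\gamma|$, split the zeros of $U_\gamma$ into ``near'' and ``far'' relative to $\zeta_0$, bound the two pieces separately to obtain $|U_\gamma(z)|\ge c\,\rho(z,Z(U_\gamma))^N$, and invoke Proposition~\ref{P1}. The technical packaging differs in two places worth noting.

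First, for the far zeros you apply Harnack termwise to each positive harmonic function $w\mapsto\log(1/\rho(w,\zeta))$ on $D(\zeta_0,\rho^*)$, whereas the paper introduces the auxiliary region $\Omega(w,z_0)=\{z:\rho(z,w)<\rho^A(z,z_0)\}$ and observes that for $z\notin\Omega$ one has the purely algebraic comparison $\rho(w,z)\ge\rho^A(z_0,z)$; summing then gives $\prod_{\text{far}}\rho(w,z)\ge |B(z_0)|^A\ge|\gamma|^A$ without Harnack. Your route is arguably more transparent; the paper's avoids invoking harmonicity for the Blaschke part.

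Second, you eliminate the singular factor \emph{first}, by combining the balance $P\mu(\zeta_0)\le\log(1/|\gamma|)$ at zeros of $U$ with radial blow-up of $P\mu$ and Harnack to reach a contradiction, and only then prove the Gorkin--Mortini bound for the (now pure) Blaschke product. The paper instead bounds $|S(w)|$ directly via Harnack from $|S(z_0)|\ge|\gamma|$, obtains the full lower bound for $|U_\gamma|$, and then argues $S\equiv1$ from the fact that the sublevel sets $\{|U_\gamma|<\delta\}$ are trapped in relatively compact neighborhoods of $Z(U_\gamma)$. Your ordering is cleaner in that the counting identity $\sum\log(1/\rho(\zeta,\zeta_0))=\log(1/|\gamma|)$ becomes exact once $\mu=0$, which you then exploit; the paper's ordering has the advantage that the Harnack step for $S$ and for the far Blaschke factors are done in one stroke.
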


In the next section we give a more elementary proof of this statement.

\cite[Example~3.7]{GMN} gives a generalized Carleson--Newman Blaschke product 
$B$ which is not a Carleson--Newman Blaschke product.
Here we give a somewhat different example.

\begin{proposition}\label{P3} There exists a sequence of finite Blaschke products $(B_k)_{k\ge 1}$ such that if $|w_k|$ tend to $1$ sufficiently rapidly, then 
$$
B=\prod_{k\ge 1}B_k(\varphi_{w_k})
$$
is a generalized Carleson--Newman Blaschke product 
but not a Carle\-son--Newman Blaschke product.
\end{proposition}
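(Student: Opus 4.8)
The plan is to take $B=\prod_k (B_k\circ\varphi_{w_k})$, where each $B_k$ is a finite Blaschke product chosen so that, on one hand, $B_k$ pushes the global product off the Carleson--Newman scale (its ``order'' grows with $k$), while on the other hand the set where $|B_k|$ is small sits in an ever thinner pseudo-hyperbolic neighbourhood of $Z(B_k)$. Concretely I would fix an increasing target $\psi:(0,1)\to(0,1)$ decaying faster than every power at $0$, a sequence $\theta_k\downarrow 0$, and build $B_k$ of degree $n_k\to\infty$ whose level sets satisfy a single, $k$-independent bound $|B_k(\zeta)|\ge\psi(\rho(\zeta,Z(B_k)))$, with the extremal configurations making $|B_k|$ as small as $\psi$ permits occurring only at the scale $\theta_k$. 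Since $\varphi_{w_k}$ is a pseudo-hyperbolic isometry, the pulled-back packet $B_k\circ\varphi_{w_k}$ inherits exactly this estimate relative to its own zero set $\varphi_{w_k}(Z(B_k))$, which is localized near $w_k$.

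First I would record the two elementary mechanisms separately. For the failure of the Carleson--Newman property I would exhibit, for each $n$, points $z_k$ (the $\varphi_{w_k}$-images of the extremal points of $B_k$) at which $|B(z_k)|$ has size $\psi(\theta_k)$ while $\rho(z_k,\Lambda)$ has size $\theta_k$, where $\Lambda=Z(B)$. Because $\psi$ decays faster than any power, $|B(z_k)|/\rho(z_k,\Lambda)^n\to 0$ for every fixed $n$; hence no inequality $|B|\ge A\rho^n(\cdot,\Lambda)$ can hold, and by the Gorkin--Mortini characterization recalled in the introduction $B$ is not a Carleson--Newman Blaschke product. The same points force the order of $B_k$ to be at least $k$, which is exactly why the construction must use $n_k\to\infty$.

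For the weak embedding property I would localize. Choosing $|w_k|\to1$ so fast that the packets $\varphi_{w_k}(Z(B_k))$ are pairwise pseudo-hyperbolically far apart and that $\sum_k n_k(1-|w_k|)<\infty$, I would check that $B$ is a genuine Blaschke product and that, for $z$ in the region of influence of the $k$-th packet, $B=(B_k\circ\varphi_{w_k})\cdot\prod_{j\ne k}(B_j\circ\varphi_{w_j})$ with the tail factor bounded below by an absolute constant $c_*>0$. This tail estimate is where the rapid growth of $|w_k|$ enters, through the standard bound $\log|B_j\circ\varphi_{w_j}(z)|\gtrsim -n_j(1-\rho(z,w_j))$ summed over $j\ne k$. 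Granting it, on this region $\rho(z,\Lambda)=\rho(z,\varphi_{w_k}(Z(B_k)))$, and isometry invariance of $\rho$ yields $|B(z)|\ge c_*\,\psi(\rho(z,\Lambda))$, which is \eqref{xda} with the single increasing function $c_*\psi$; for $z$ outside all packets $|B|$ is bounded below outright.

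The step I expect to be the main obstacle is the construction of the $B_k$ realizing a common modulus $\psi$ with orders $k\to\infty$: a naive clustered choice fails, because a packet whose $n_k$ zeros lie in a pseudo-hyperbolic ball of radius $\ll t$ has $|B_k|\approx t^{n_k}$ on the whole set $\{\rho(\cdot,\varphi_{w_k}(Z(B_k)))=t\}$, which would force $\psi(t)\le t^{n_k}$ for all $k$ and hence $\psi\equiv0$. The delicate point is therefore to spread the zeros of $B_k$ so that the extremal small values of $|B_k|$ are confined to the single scale $\theta_k\downarrow0$ while the behaviour at each fixed scale $t$ stays uniformly controlled; verifying this $k$-independent level-set bound, and then checking that it survives passage to the infinite product together with the uniform tail estimate, is the heart of the argument.
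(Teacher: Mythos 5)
Your framework coincides with the paper's: packets $B_k\circ\varphi_{w_k}$ sharing a common, $k$-independent minorant $\psi$ that decays faster than every power, packets kept pseudo-hyperbolically far apart so that near each packet the tail of the product is bounded below by a constant $c_*$, WEP concluded with minorant $c_*\psi$, and failure of the Carleson--Newman property concluded from the Gorkin--Mortini power characterization because the extremal scales tend to $0$. Those reductions are sound (the paper is equally brief about the tail estimate, hiding it in ``$|w_k|$ tend to $1$ sufficiently rapidly''). But the one step you explicitly defer --- producing finite Blaschke products of unbounded degree satisfying a single bound $|B_k(\zeta)|\ge\psi(\rho(\zeta,Z(B_k)))$ --- is not a technical verification that can be postponed: it is the entire content of the paper's proof, and your proposal contains no candidate configuration for it. As written, this is a genuine gap, located exactly at what you yourself call the heart of the argument.

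For comparison, here is how the paper resolves your obstacle. For each $N$, place $(N-j)^+$ zeros in every Whitney square at depth $j$ (i.e., in $\{1-2^{-j+1}\le|z|<1-2^{-j}\}$), spread so that inside each such square both the covering radius and the mutual separation are $\asymp(N-j)^{-1/2}$; call the resulting set $\Sigma_N$ and the product $B_N$. For $w$ at depth $j$, the zeros in the $2^{j'-j}$ squares at depths $j'\ge j$ lying under $w$'s square contribute $\asymp\sum_{j'\ge j}(N-j')^+\asymp\bigl[(N-j)^+\bigr]^2$ to $\sum_{z}\log(1/\rho(z,w))$ (per-zero contribution $\asymp 2^{j-j'}$, with $(N-j')^+2^{j'-j}$ such zeros), while all remaining zeros contribute lower-order terms; hence $|B_N(w)|\ge c\,\rho(w,\Sigma_N)\exp\bigl(-C[(N-j)^+]^2\bigr)$. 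The crucial point --- and the answer to your ``single scale'' heuristic --- is that the covering-radius condition makes the two quantities correlated: $\rho(w,\Sigma_N)\lesssim(N-j)^{-1/2}$, so $[(N-j)^+]^2\lesssim\rho(w,\Sigma_N)^{-4}$, and the $N$-dependent estimate becomes the $N$-free estimate $|B_N(w)|\ge c_1\rho\exp(-c/\rho^4)$ with $\rho=\rho(w,\Sigma_N)$, i.e., your common minorant is $\psi(x)=c_1x\exp(-c/x^4)$. Note that this configuration saturates $\psi$ at every scale $(N-j)^{-1/2}$, $1\le j<N$, simultaneously, rather than confining extremality to one scale $\theta_k$ as in your specification; it is the density of zeros decreasing linearly with depth, not a one-scale extremal configuration, that makes the bound uniform in $N$.
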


This construction shows, in particular, that the indicator function 
$$
\eta_B(\varepsilon)=\sup\{\eta>0:|B(z)|<\eta\implies \rho(z,Z(B))<\varepsilon\}
$$
of a generalized Carleson--Newman Blaschke product may vanish arbitrarily rapidly: for every increasing $\psi:(0,1)\to(0,1)$ there exists 
a generalized Carleson--Newman Blaschke product $B$ such that 
$$
\eta_B(\varepsilon)=o(\psi(\varepsilon)),\qquad \varepsilon\to 0.
$$
On the other hand, one can use this construction to see that a finite pseudo-hyperbolic perturbation may make a Blaschke product to stop being 
generalized Carleson--Newman: there exists two Blaschke products $B,B^*$ 
with zero sets $\{z_k\},\{z^*_k\}$ correspondingly, such that $\sup_k\rho(z_k,z^*_k)<1$, 
$B\in\WEP$, $B^*\not\in\WEP$.

To deal with divisors of generalized Carleson--Newman inner functions, we introduce the following notion. We say that a function $U\in H^\infty$ satisfies the 
weighted area condition 
$(A)$ if for any $\varepsilon\in(0,1)$,
$$
\int_{\Delta(U,\varepsilon)}\frac{dm_2(z)}{1-|z|}=+\infty,
$$
where
$$
\Delta(U,\varepsilon)=\{z:|U(z)|<\varepsilon\}.
$$
\medskip

\begin{proposition}\label{L2} If $U\in(A)$, then there is no Blaschke product $B$ such that $BU$ is a generalized Carleson--Newman function.
\end{proposition}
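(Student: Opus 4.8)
The plan is to argue by contradiction. Suppose that some Blaschke product $B$ makes $BU$ a generalized Carleson--Newman function, so that by \eqref{xda} there is an increasing $\psi\colon(0,1)\to(0,1)$ with
\[
|B(z)U(z)|\ge\psi\bigl(\rho(z,Z(BU))\bigr),\qquad z\in\D,
\]
where $Z(BU)=Z(B)\cup Z(U)$. Since $B$ is a Blaschke product we have $|B|\le1$ on $\D$, and hence $\Delta(U,\varepsilon)\subseteq\Delta(BU,\varepsilon)$ for every $\varepsilon\in(0,1)$. Put $s_0=\sup_{t<1}\psi(t)$; as $\psi$ is positive and increasing, $s_0>0$. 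I would then fix any $\varepsilon\in(0,s_0)$ and set $\eta=\sup\{t\in(0,1):\psi(t)<\varepsilon\}$. The choice $\varepsilon<s_0$ guarantees $\eta<1$, and the displayed lower bound forces every $z\in\Delta(BU,\varepsilon)$ to satisfy $\psi(\rho(z,Z(BU)))<\varepsilon$, whence $\rho(z,Z(BU))\le\eta$. Thus
\[
\Delta(U,\varepsilon)\subseteq\{z\in\D:\rho(z,Z(BU))\le\eta\}.
\]
(If $Z(BU)=\varnothing$ the lower bound would read $|BU|\ge\psi(1^-)=s_0$, which already contradicts $\Delta(U,\varepsilon)\neq\varnothing$; so we may assume $Z(BU)\neq\varnothing$.)

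The key point is that $Z(BU)$ is a Blaschke sequence: both $B$ and $U$ lie in $H^\infty$ and are not identically zero, so the zeros of each obey the Blaschke condition, and $Z(BU)=\{w_k\}$ with $\sum_k(1-|w_k|)<\infty$. I would then estimate the weighted area of a fixed pseudo-hyperbolic neighborhood of such a sequence. For $w\in\D$ the pseudo-hyperbolic disc $\{z:\rho(z,w)\le\eta\}$ is a Euclidean disc of radius comparable to $\eta(1-|w|)$, on which $1-|z|$ is comparable to $1-|w|$, with constants depending only on $\eta$; consequently
\[
\int_{\{z:\rho(z,w)\le\eta\}}\frac{dm_2(z)}{1-|z|}\le C(\eta)\,(1-|w|).
\]
Summing over $w=w_k$ and invoking the Blaschke condition yields
\[
\int_{\{z:\rho(z,Z(BU))\le\eta\}}\frac{dm_2(z)}{1-|z|}\le C(\eta)\sum_k(1-|w_k|)<\infty .
\]

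Combining the two steps, the fixed set $\Delta(U,\varepsilon)$ is contained in a pseudo-hyperbolic $\eta$-neighborhood of the Blaschke sequence $Z(BU)$ with $\eta<1$, so $\int_{\Delta(U,\varepsilon)}dm_2(z)/(1-|z|)<\infty$. This contradicts the hypothesis $U\in(A)$, and the proposition follows. I expect the only genuinely delicate point to be conceptual rather than computational: one must recognise that $Z(U)$ is automatically a Blaschke sequence even when $U\in(A)$, so that condition $(A)$ can hold only because $\Delta(U,\varepsilon)$ protrudes far beyond every thin neighborhood of $Z(U)$ (for instance near the boundary support of a singular factor of $U$). The assumed $\WEP$ property of $BU$ is precisely what would illegitimately pull $\Delta(U,\varepsilon)$ back into such a neighborhood of the Blaschke set $Z(BU)$; the remaining work—tracking the $\eta$-dependence of the comparison constants in the area estimate—is routine.
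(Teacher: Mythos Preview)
Your argument is correct and follows the same route as the paper's proof. The paper simply invokes the $\WEP$ definition with pseudo-hyperbolic radius $1/2$ to obtain $\Delta(U,\varepsilon)\subset D(Z(BU),1/2)$ for some $\varepsilon>0$, then uses the same weighted-area estimate $\int_{D(w,1/2)}\frac{dm_2(z)}{1-|z|}\asymp 1-|w|$ and the Blaschke condition on $Z(BU)$; your version is the same idea, only with the extra bookkeeping of extracting $\eta$ from $\psi$ rather than fixing the neighborhood radius at $1/2$.
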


The following result answers Question~4 in \cite{GMN}:

\begin{theorem}\label{T1} There exists a singular inner function 
$S=S_\nu\in(A)$. Therefore, there is no Blaschke product $B$ such that 
$BS$ is a generalized Carleson--Newman inner function.
\end{theorem}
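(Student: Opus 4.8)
The second assertion of the theorem is immediate from Proposition~\ref{L2}: as soon as we produce a singular inner function $S=S_\nu$ satisfying $(A)$, no Blaschke product $B$ can make $BS$ a generalized Carleson--Newman function, which is exactly the negative answer to Question~4. So the whole task is to construct the singular measure $\nu$ on $\T$. I would work throughout with the Poisson representation
\begin{equation*}
|S_\nu(z)|=\exp\bigl(-P\nu(z)\bigr),\qquad P\nu(z)=\int_\T\frac{1-|z|^2}{|e^{it}-z|^2}\,d\nu(t),
\end{equation*}
so that $\Delta(S_\nu,\varepsilon)=\{z\in\D:P\nu(z)>\log(1/\varepsilon)\}$. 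Condition $(A)$ then becomes the purely potential-theoretic assertion that each super-level set $\{P\nu>c\}$, $c>0$, has infinite $\int\frac{dm_2(z)}{1-|z|}$. Since this measure assigns mass $\approx|E|$ to a sector $\{\arg z\in E\}$ intersected with a single dyadic annulus $1-|z|\approx 2^{-n}$, it suffices to exhibit, inside each $\{P\nu>c\}$, a region whose angular width, summed over the dyadic radial scales, diverges.

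The construction I would use is a Cantor-type set with the thinning concentrated at widely separated scales. Fix radii $1>s_1>s_2>\cdots\to0$ and build $E_1\supset E_2\supset\cdots$, where each $E_n$ is a finite union of arcs of length $s_n$; passing from $E_n$ to $E_{n+1}$ I subdivide and keep one half of the total length, evenly spread, as arcs of length $s_{n+1}$, so that $|E_n|=2^{-n}$. Let $\nu$ be the weak-$*$ limit of the normalized length measures on the $E_n$; it is a probability measure supported on $E=\bigcap_n E_n$ with $|E|=\lim_n|E_n|=0$, hence singular, and $S_\nu$ is a genuine singular inner function. The density of $\nu$ on $E_n$ equals $D_n=1/|E_n|=2^n\to\infty$. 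The point of separating the scales is that for $\arg z$ in the middle portion of an arc of $E_n$ and $s_{n+1}<1-|z|<s_n$, the arc $I(\arg z,\,1-|z|)$ lies inside one ``solid'' component on which $\nu$ is uniform of density $D_n$; the elementary bound, obtained by restricting the Poisson integral to this sub-arc,
\begin{equation*}
P\nu(z)\ \ge\ \frac{1}{4(1-|z|)}\,\nu\bigl(I(\arg z,\,1-|z|)\bigr)\ \gtrsim\ D_n,
\end{equation*}
then holds there. Choosing the ratios so that $s_n/s_{n+1}$ grows fast, say $\log(s_n/s_{n+1})=2^n/n$, I obtain
\begin{equation*}
\int_{\{P\nu>c\}}\frac{dm_2(z)}{1-|z|}\ \gtrsim\ \sum_{n\ge n_0(c)}|E_n|\,\log\frac{s_n}{s_{n+1}}\ =\ \sum_{n\ge n_0(c)}2^{-n}\cdot\frac{2^n}{n}\ =\ \infty,
\end{equation*}
where $n_0(c)$ is chosen so that $D_n$ exceeds $c$ up to the absolute constant. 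Thus $S_\nu\in(A)$.

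The main obstacle is the rigorous control of $P\nu$ near the boundary, and the essential simplification is that only a \emph{lower} bound is needed: to verify $(A)$ it is enough to show $P\nu>c$ on a sufficiently large region, so I never have to estimate $P\nu$ from above in the gaps. The work is therefore concentrated in the uniform lower estimate $P\nu(z)\gtrsim D_n$ on the solid annular regions over $E_n$, together with the bookkeeping of the edge effects of the finitely many arcs (the set of $\arg z$ within $1-|z|$ of an arc endpoint has measure $\le(\card\{\text{arcs}\})\cdot 2(1-|z|)$, which stays a fixed fraction of $|E_n|$ over all but a bounded logarithmic portion of the range $s_{n+1}<1-|z|<s_n$, and so does not affect the divergence), and the verification that the threshold $n_0(c)$ is finite for every $c$, so that the single function $S_\nu$ works simultaneously for all $\varepsilon\in(0,1)$. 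Balancing the two competing requirements---$|E_n|\to0$ for singularity against $\sum_n|E_n|\log(s_n/s_{n+1})=\infty$ for $(A)$---is exactly what the fast growth of the scale ratios $s_n/s_{n+1}$ provides.
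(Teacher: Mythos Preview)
Your proof is correct and takes a genuinely different route from the paper. The paper builds $\nu$ as an \emph{atomic} measure, a sum of blocks $\mu_{\varepsilon_k,n_k,N_k}=\varepsilon_k\sum_{j=1}^{n_k}\delta_{\exp(2\pi i j/N_k)}$; one immediately has $(\mathcal P*\mu_k)(z)\ge c\,\varepsilon_kN_k$ on the sector $\{0<\arg z<2\pi n_k/N_k,\ 1/N_k<1-|z|<n_k/N_k\}$, whose weighted area is $\asymp (n_k\log n_k)/N_k$. Choosing parameters so that $\varepsilon_kN_k\to\infty$, $(n_k\log n_k)/N_k\to\infty$, and $\sum_k\varepsilon_kn_k<\infty$ (e.g.\ $\varepsilon_k=2^{-k^2}$, $n_k=2^{k^2-k}$, $N_k=k\,2^{k^2}$) already gives $S_\nu\in(A)$. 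Your construction instead produces a \emph{non-atomic} Cantor-type measure, exploiting its self-similar structure: at generation $n$ the ``density'' $D_n=2^n$ forces $P\nu\gtrsim D_n$ over the arcs of $E_n$ throughout the radial range $s_{n+1}\lesssim 1-|z|\lesssim s_n$, contributing hyperbolic area $\asymp|E_n|\log(s_n/s_{n+1})$. The paper's approach is shorter and the Poisson estimate is a one-line kernel bound near each atom; your approach has a bit more bookkeeping (the edge effects and the lower cut-off $1-|z|\gtrsim s_{n+1}$ needed so that $I(\arg z,1-|z|)$ captures at least one arc of $E_{n+1}$) but yields a continuous singular measure, showing in particular that the phenomenon is not tied to discreteness of $\supp\nu$. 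Both arguments rest on exactly the same mechanism---balancing $\|\nu\|<\infty$ against the divergence of the weighted area of a single super-level set of $P\nu$---and, as you note, neither ever needs an upper bound on $P\nu$.
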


An easy modification of the construction in the proof of Theorem~\ref{T1}
(using small Frostman shifts of singular factors of $S$), gives 
a Blaschke product $B\in(A)$. Therefore, there is no Blaschke product $B_1$ such that 
$B_1BS$ is a generalized Carleson--Newman inner function.
\smallskip

In the proof of Theorem~\ref{T1} we construct an atomic measure 
$\nu=\sum_k c_k\delta_{\lambda_k}$ such that $S_\nu$ cannot be multiplied into the $\WEP$ class. The set $\{\lambda_k\}$ is countable, and its 
entropy is infinite. A simple modification of the construction
in \cite[Example~3.7]{GMN} produces a measure $\nu_1=\sum_k \tilde c_k\delta_{\lambda_k}$ with $\tilde c_k>0$, and a Blaschke product 
$B_1$ such that $S_{\nu_1}B_1$ is a generalized Carleson--Newman inner function. 

Thus, just information on the support of $\mu$ is not enough to decide 
whether there exists a Blaschke product $B$ satisfying
$BS_\mu\in\WEP$. It is interesting whether one can construct
an example like that in Theorem~\ref{T1} such that the entropy of 
the support of $\nu$ is finite.
\smallskip

Proposition~\ref{L2} gives a necessary condition ($U\notin(A)$) for the 
existence of Blaschke products $B$ such that 
$BU\in\WEP$. Next we verify that this condition is not sufficient.
For simplicity, we pass here to the half-plane setting.

\begin{proposition}\label{E8} There exists a singular function $S$ in 
$\mathbb C_+=\{z:\Im z>0\}$ such that for any $\varepsilon\in(0,1)$,
$$
\mathcal A(\Delta(S,\varepsilon))=
\int_{\Delta(S,\varepsilon)}\frac{dm_2(z)}{\Im z}<\infty,
$$
but there is no Blaschke product $B$ such that $BS$ is a generalized Carle\-son--Newman inner function.
\end{proposition}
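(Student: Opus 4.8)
The plan is to keep everything in $\mathbb C_+$, to record the very weak structural consequences of the $\WEP$ for a product $BS$, and then to engineer a finite atomic singular measure whose level sets force any competitor $B$ to create ``deep holes,'' i.e.\ points far (pseudohyperbolically) from $Z(B)$ at which $|B|$ is nonetheless tiny. Write $-\log|S(z)|=\pi^{-1}\int_{\mathbb R}\frac{\Im z}{|z-t|^2}\,d\mu(t)=:P[\mu](z)$, so that $\Delta(S,\varepsilon)=\{P[\mu]>\log(1/\varepsilon)\}$ is, for a well--separated atomic $\mu=\sum_k c_k\delta_{\lambda_k}$, a union of horodiscs. Suppose $BS\in\WEP$ with gauge $\psi$; since $S$ is zero--free, $Z(BS)=Z(B)$, and from $|BS|\le|B|$ and $|BS|\le|S|$ I extract two elementary facts: (ii) $|B(z)|\ge\psi(\rho(z,Z(B)))$ for all $z$, that is $B$ itself is a generalized Carleson--Newman product; and (i) for every $\rho_0\in(0,1)$, $\{|S|<\psi(\rho_0)\}\subset\{z:\rho(z,Z(B))<\rho_0\}$. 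The whole argument will consist in building $\mu$ so that (i) and (ii) cannot coexist.

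The mechanism is the following \emph{deep--hole principle}, which I would isolate first. Fix a Whitney box $W$ of side $\ell$ sitting at height $\asymp\ell$ over an interval $I$, and let $z^\ast$ be its apex. If $\mu$ puts mass $m$ on $I$, then (i), applied at the fixed scale $\rho_0=\tfrac12$, forces $Z(B)$ to cover the horodiscs of the atoms in $I$ at level $\psi(\tfrac12)$; an elementary count (each horodisc of radius $r$ requires covering zeros of total mass $\sum\Im w\asymp r$, and the radii are $\asymp c_k/\log(1/\psi(\tfrac12))$) gives $\sum_{w\in Z(B)\cap Q(I)}\Im w\gtrsim m/L_0$ with $L_0=\log(1/\psi(\tfrac12))$. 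Since these zeros lie at height $\ll\ell$ inside $I$, while $z^\ast$ sits at height $\asymp\ell$, one has $1-\rho(z^\ast,w)\asymp \Im w/\ell$, whence
$$
\log\frac{1}{|B(z^\ast)|}\ \ge\ \sum_{w}\bigl(1-\rho(z^\ast,w)\bigr)\ \gtrsim\ \frac{1}{\ell}\sum_{w\in Q(I)}\Im w\ \gtrsim\ \frac{m}{L_0\,\ell}.
$$
Thus if a cluster has $m/\ell$ large, then $|B(z^\ast)|$ is forced to be small, and because the forced zeros are pseudohyperbolically far below $z^\ast$ we also get $\rho(z^\ast,Z(B))$ close to $1$; together with (ii) this is a contradiction once $m/\ell$ exceeds a constant depending only on $\psi$. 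The finite weighted--area requirement is compatible with this: the same horodisc computation gives $\mathcal A(\Delta(S,\varepsilon))\asymp\sum_k (\text{radius})_k\asymp \log(1/\varepsilon)^{-1}\sum_k c_k$, so it suffices that $\mu$ have \emph{finite total mass} $\sum_k c_k<\infty$.

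The genuine difficulty, and the reason the construction must be recursive rather than a single family of clusters, is that $B$ is adversarial: it may place $O(1)$ zeros inside $W$ to make $\rho(z^\ast,Z(B))$ small and thereby respect (ii) at that apex. Neutralising one box costs Blaschke mass $\asymp \ell/(1+|{\rm loc}|^2)$, and a naive count shows $\sum m_n<\infty$ together with $m_n/\ell_n\to\infty$ would force $\sum\ell_n<\infty$, within the adversary's budget. The plan is therefore to arrange the clusters in generations: inside every box that $B$ has neutralised, plant a finer sub--cluster of geometrically smaller total mass, producing at the next scale a fresh obligation of type (i) and a fresh apex demanding (ii). Each neutralisation begets a strictly finer one, the per--generation masses are summable (so $\sum_k c_k<\infty$ and the weighted area stays finite), but the Blaschke mass that $B$ is compelled to spend on neutralisation accumulates over the generations to $+\infty$, contradicting the Blaschke condition. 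The heart of the write--up is thus the bookkeeping lemma showing that, whatever finitely many zeros $B$ inserts at generation $n$, the capturing condition (i) at generation $n{+}1$ reproduces a deep hole at a scale $B$ has not yet paid for; one may equivalently phrase the same contradiction through Proposition~\ref{P2}, by showing that the clusters force the weighted count $\sum_{w\in (BS)^{-1}(\gamma)\cap Q(I_n)}\Im w$ to exceed $C|I_n|$ along the generations for every $B$ and every small $\gamma$, so that $\varphi_\gamma(BS)$ is never a Carleson--Newman Blaschke product. I expect this uniform-in-$B$, uniform-in-$\psi$ lower bound—balanced against $\sum_k c_k<\infty$—to be the main obstacle.
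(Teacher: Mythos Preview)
Your preliminary reduction is right and matches the paper: from $BS\in\WEP$ one extracts (i) $\Delta(S,\psi(1/2))\subset D(Z(B),1/2)$ and (ii) $B\in\WEP$. Your ``deep--hole'' computation is also correct: covering a cluster of mass $m$ over an interval forces Blaschke mass $\gtrsim m$ in zeros, and at a point $z^\ast$ of height $\ell$ above the cluster one gets $\log|B(z^\ast)|^{-1}\gtrsim m/\ell$. The trouble is exactly where you locate it, and I do not think your recursive plan resolves it. The neutralisation cost per generation is $\sum_{\text{boxes}}\ell_n$, while the deep--hole condition forces $\ell_n$ to be \emph{smaller} than the $\mu$--mass inside the box; hence the total neutralisation cost is dominated by $\|\mu\|<\infty$, regardless of nesting. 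Trying to separate the apices pseudohyperbolically forces $\ell_{n-1}/\ell_n$ large, which makes $\sum\ell_n$ converge even faster. The bookkeeping lemma you hope for would have to beat this inequality, and I do not see a mechanism for that in the proposal.

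The paper sidesteps the adversary entirely by changing the target of the contradiction: instead of producing a single point where (ii) fails, it shows that (i) alone forces $B\in(A)$, i.e.\ $\mathcal A(\Delta(B,\varepsilon_1))=\infty$ for every $\varepsilon_1$, which is incompatible with (ii) by the argument of Proposition~\ref{L2}. Adding zeros can only make $|B|$ smaller, so this conclusion is immune to any extra zeros $B$ might place. Concretely, each block $\mu_j$ is a \emph{parallel} array of $2^{n_j}$ columns of atoms (not nested); the level set $\Omega_j=\{u_j\ge cn_j^{1/3}\}$ consists of $2^{n_j}$ thin rectangles with $\mathcal A(\Omega_j)\asymp n_j\,2^{2n_j}/N_j$. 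Any $\Lambda_j$ covering $\Omega_j$ at pseudohyperbolic scale $1/2$ then satisfies $\sum_{w\in\Lambda_j}\log\rho(z,w)^{-1}\ge Cn_j^{1/3}$ throughout a much larger rectangle $E_j$ sitting above all the columns, with $\mathcal A(E_j)\asymp n_j^{5/3}\,2^{2n_j}/N_j$. Choosing $n_j,N_j$ so that $\sum_j n_j 2^{2n_j}/N_j<\infty$ but $n_j^{5/3}2^{2n_j}/N_j\to\infty$ gives $\mathcal A(\Delta(S,\varepsilon))<\infty$ while $\mathcal A(\Delta(B,\varepsilon_1))=\infty$. The gain factor $n_j^{2/3}$ between the two areas is the whole point, and it comes from spreading the clusters horizontally rather than nesting them vertically.
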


\section{Proofs}

\begin{proof}[Proof of Proposition~\ref{P1}:]  (a) 
Given $z\in\D$, $0<\delta<1$, denote
$$
D(z,\delta)=\{w\in\D:\rho(z,w)<\delta\}.
$$
Fix $z\in\D$ and let us verify that$$
M=\card [\Lambda\cap D(z,1/2)]\le N(A,n).
$$

Indeed, for some absolute constants $c_1,c_2>0$ we have 
\begin{gather*}
\max_{D(z,1/2)} |B|\le \exp(-c_1M),\\
\max_{w\in D(z,1/2)} \rho(w,\Lambda)\ge c_2/\sqrt M.
\end{gather*}
By \eqref{1}, we have
$$
A(c_2/\sqrt M)^n\le \exp(-c_1M),
$$
and, hence,
$$
M\le N(A,n).
$$
\medskip

(b) Let us verify that for some $\delta=\delta(A,n)>0$ we have 
$$
\card [\Lambda\cap D(z,\delta)]\le n,\qquad z\in\mathbb D.
$$

Otherwise, suppose that 
$$
\card [\Lambda\cap D(z,\delta)]>n 
$$
for some $z$ and sufficiently small $\delta$.
Then
$$
|B(w)|\le (4\delta)^{n+1},\qquad w\in\partial D(z,2\delta).
$$
Hence,
$$
\rho(w,\Lambda)\le A^{-1/n}(4\delta)^{(n+1)/n},\qquad w\in\partial D(z,2\delta).
$$
Therefore,
$$
\card [\Lambda\cap D(z,3\delta)]>c(A,n)\delta^{-1/n} ,
$$
that contradicts to (a) for sufficiently small $\delta$.
\medskip

(c) Next we consider the following graph. Its vertices are points of 
$\Lambda$, and two points $z$ and $w$ in $\Lambda$ 
are connected by an edge if and only if $\rho(z,w)<\delta/(2n)$,
with $\delta$ defined in (b). 
Now, the statement (b) implies that connected components
$\Lambda^\alpha\subset\Lambda$, $\alpha\in\mathcal A$, 
have at most $n$ elements. We divide $\Lambda$
into $n$ subsets $\Lambda_j$, $1\le j\le n$, in such a way that
$$
\card [\Lambda^\alpha\cap \Lambda_j]\le 1,\qquad \alpha\in\mathcal A,
\quad 1\le j\le n.
$$
Let us verify that $\Lambda_j$ are interpolating sets. 
Fix, say, $z\in\Lambda_1$. Since
$$
\card\bigl[\Lambda\cap D(z,\delta/(3n))\setminus D(z,\delta/(6n))\bigr]<n,
$$
we can choose 
$$
\delta_1\in\Bigl(\frac{\delta}{6n},\frac{\delta}{3n}\Bigr)
$$ 
such that
$$
\rho(\Lambda,\partial D(z,\delta_1))\ge \frac{\delta}{12n^2}.
$$ 
If $B_1$ is a Blaschke product constructed by 
$\Lambda_1$, then by \eqref{1}
$$
|B_1(w)|\ge |B(w)|\ge A\Bigl(\frac{\delta}{12n^2}\Bigr)^n,
\qquad w\in \partial D(z,\delta_1).
$$
Using the M\"obius transform $\varphi_z$, we conclude that 
$B_1$ satisfies the Carleson condition:
$$
|B^\prime_1(z)|(1-|z|^2)\ge \frac{3nA}{\delta}\Bigl(\frac{\delta}{12n^2}\Bigr)^n. 
$$
\end{proof}

\begin{proof}[Proof of Proposition~\ref{P2}:] To verify that the Blaschke factor of $U_\gamma$ is a Carleson--Newman Blaschke product it suffices to check that for some $c,k$ depending on 
$\gamma$, $a$ and $U$ we have 
\begin{equation}
|U_\gamma(w)|\ge c\rho^k(w,Z(U_\gamma)), \qquad w\in \mathbb D,
\label{2}
\end{equation}
and use Proposition~\ref{P1}.

Let $\varepsilon>0$, $0<a<1$, $(1+\varepsilon)|\gamma|<\psi(a)$.
Consider $w\in\mathbb D$ such that $|U_\gamma(w)|<\varepsilon/2$
(otherwise, there is nothing to check). Then $|U(w)|<\psi(a)$,
and 
$$
\rho(w,z_0)\le a<1
$$
for some $z_0\in Z(U)$. We have 
\begin{equation}
|U_\gamma(z_0)|=|\gamma|.
\label{x11}
\end{equation}
Comparing the behavior of the functions $(x-a)/(1-a x)$
and $x^A$ for $x\to 1-$ for large $A$, we find that for some 
$A=A(a,\gamma)<\infty$ and $c(a,\gamma)<1$,
the set
$$
\Omega(w,z_0)=\{z\in\mathbb D:\rho(z,w)< \rho^A(z,z_0)\}.
$$
is a subset of $D(z_0,c)$, and, hence, by \eqref{x11}, 
$$
\card(Z(U_\gamma)\cap\Omega(w,z_0))\le C=C(a,\gamma)<\infty.
$$
Let $U_\gamma=BS$, where $B$ is a Blaschke product, and $S$
is a singular function. 
We count the points in $Z(B)=Z(U_\gamma)$ taking into account their 
multiplicities in $B$. 
Then,
\begin{align*}
|B(w)|&=
\prod_{z\in Z(B)}\rho(w,z)\notag
\\&=\prod_{z\in Z(B)\cap \Omega(w,z_0)}\rho(w,z)
\cdot\prod_{z\in Z(B)\setminus \Omega(w,z_0)}\rho(w,z)\notag
\\ &\ge \rho^C(w,Z(U_\gamma))\cdot 
\prod_{z\in Z(B)\setminus \Omega(w,z_0)}\rho^A(z_0,z)\notag
\\&\ge \rho^C(w,Z(U_\gamma))\cdot \prod_{z\in Z(B)}\rho^A(z_0,z)
=\rho^C(w,Z(U_\gamma))\cdot |B(z_0)|^A\notag
\\
&\ge 
|\gamma|^A\rho^C(w,Z(U_\gamma)).
\label{3}
\end{align*}

Thus, by Proposition~\ref{P1}, $B$ is a Carleson--Newman Blaschke product,
that is a finite 
product of interpolating Blaschke products.

The function $\log(1/|S|)$ is harmonic and positive 
in the unit disc, and $\rho(z_0,w)\le a<1$. Therefore, by the Harnack inequality applied, say, to $\log(1/|S(\varphi_{z_0})|)$, we obtain
$$
\log\frac 1{|S(w)|}\le K(a)\log\frac 1{|S(z_0)|}
\le K(a)\log\frac 1{|\gamma|}.
$$
Hence, for sufficiently
small $\delta$, the set $\{z:|U_\gamma(z)|<\delta\}$
is contained in a neighborhood of $Z(U_\gamma)$
consisting of relatively compact components. This implies that $S=1$.
\end{proof}

\begin{proof}[Proof of Proposition~\ref{P3}:] Let us divide $\mathbb D$ into the union of   …Whitney squares 
$$
Q_{jk}=\{z:1-2^{-j+1}\le |z|< 1-2^{-j},\, 2\pi(k-1)2^{-j}
\le\arg z< 2\pi k2^{-j}\},
$$
$j\ge 1$, $1\le k\le 2^j$.
Given $N\ge 1$, consider a family $\Sigma_N$
of points in $\mathbb D$ such that
$$
\card (\Sigma_N\cap Q_{jk})=(N-j)^+=\max(N-j,0).
$$
Suppose that for $1\le j<N$, $1\le k\le 2^j$,
\begin{align*}
\sup_{w\in Q_{jk}}\rho(w,\Sigma_N)&\asymp (N-j)^{-1/2},\\
\rho(w,\Sigma_N\setminus\{w\})&\asymp (N-j)^{-1/2},\qquad
w\in Q_{jk}\cap \Sigma_N.
\end{align*}
Then for $j\ge 1$, $1\le k\le 2^j$, $w\in Q_{jk}$
we have
$$
\sum_{z\in\Sigma_N\setminus \{d_w\}}\log\frac1{\rho(z,w)}\asymp 
\bigl[(N-j)^+\bigr]^2+1.
$$
Therefore, if
$$
B_N(w)=\prod_{z\in\Sigma_N}\frac{w-z}{1-w\bar z},
$$
then
$$
|B_N(w)|\ge \psi(\rho(w,\Sigma_N)),\qquad w\in\mathbb D,
$$
where $\psi(x)=c_1x\exp(-c/x^4)$, with $c,c_1$ independent of $N$.

Now, if $|w_k|$ tend to $1$ sufficiently rapidly, then
$$
\prod_{k\ge 1}B_k(\varphi_{w_k})
$$
satisfies the $\WEP$ condition, but $B$ is not a Carleson--Newman Blaschke product.
\end{proof}

\begin{proof}[Proof of Proposition~\ref{L2}:] If $BU\in\WEP$, then for some 
$\varepsilon>0$ we have
$$
\Delta(U,\varepsilon)\subset D(Z(BU), 1/2),
$$
where $D(Z(BU), 1/2)$ is the $1/2$-pseudohyperbolic 
neighborhood of the set $Z(BU)$: 
$$
D(Z(BU), 1/2)=\cup_{w\in Z(BU)}D(w,1/2).
$$
Since
$$
\int_{D(w,1/2)}\frac{dm_2(z)}{1-|z|}\asymp 1-|w|,
$$
we obtain
$$
\int_{D(Z(BU), 1/2)}\frac{dm_2(z)}{1-|z|}\le C\cdot
\sum_{w\in Z(BU)}1-|w|<\infty. 
$$
\end{proof}

\begin{proof}[Proof of Theorem~\ref{T1}:]
Given $\varepsilon>0$, $1\le n\le N$, let  
$$
\mu=\mu_{\varepsilon,n,N}=\varepsilon\sum_{1\le k\le n}
\delta_{\exp(2\pi i k/N)}.
$$
Then $\|\mu\|=n\varepsilon$, and for some absolute constant $c>0$ we have 
$$
(\mathcal P*\mu)(z)\ge c\varepsilon N
$$
for $0<\arg z<2\pi n/N$, $1/N< 1-|z| < n/N$.
Therefore,
$$
\int_{z: (\mathcal P*\mu)(z)\ge c\varepsilon N}
\frac{dm_2(z)}{1-|z|}\ge C\int_0^{2\pi n/N}d\theta\int_{1/N}^{n/N}
\frac {dr}r\ge \frac{Cn\log n}{N}.
$$
Let us choose $\varepsilon_k,n_k,N_k$ satisfying the asymptotic properties
$$
\frac{n_k\log n_k}{N_k}\to\infty,\quad \varepsilon_kN_k\to\infty,
\quad k\to\infty,
\quad \sum_k \varepsilon_kn_k<\infty;
$$
for example, we can take 
$$
\varepsilon_k=2^{-k^2},\quad n_k=2^{k^2-k},\quad
N_k=k2^{k^2}.
$$
If $\nu$ is the sum of the measures
$\mu_{\varepsilon_k,n_k,N_k}$, $k\ge 1$, then $S_\nu\in(A)$.
Thus, by Lemma 2, there is no Blaschke product $B$ such that 
$BS$ satisfies the $\WEP$ condition.
\end{proof}

\begin{proof}[Proof of Proposition~\ref{E8}:]
It suffices to construct a singular function $S$ such that (i) 
$$
\mathcal A(\Delta(S,\delta))<\infty,\qquad 0<\delta<1,
$$
and (ii) if $\varepsilon>0$ and $B_1$ is a Blaschke product such that
$$
\Delta(S,\varepsilon)\subset D(Z(B_1), 1/2),
$$ 
then
$$
\mathcal A(\Delta(B_1,\varepsilon_1))=+\infty,\qquad 0<\varepsilon_1<1.
$$

For some $n_j, N_j$, $N_j> 2^{2n_j}$, set 
\begin{gather*}
\lambda_{m,s}=\frac{1}{N_j}\bigl(sn_j^{2/3}2^{n_j}+m\bigr),\\
\mu_j=\frac{n_j^{1/3}}{N_j}\sum_{0\le s<2^{n_j}}\sum_{0\le m< 2^{n_j}}
\delta_{\lambda_{m,s}}.
\end{gather*}
Let $u_j=\mathcal P*\mu_j$ be the Poisson integral of $\mu_j$ in the upper half-plane, and let $S$ be the singular function determined by the sum
of the measures $\mu_j$.

Then 
$$
\|\mu_j\|=
\frac{n_j^{1/3}}{N_j}\cdot 
2^{2n_j},
$$
and for $0<\delta<1$ we have 
$$
\mathcal A(\{z:u_j>\delta\}))\le c(\delta)\frac{n_j}{N_j}\cdot 2^{2n_j}.
$$

Furthermore, for some absolute constant $c>0$ we have 
$$
u_j\ge cn_j^{1/3}
$$
on the set
$$
\Omega_j=\bigcup_{0\le s<2^{n_j}}
\Bigl\{\frac1{N_j}\le \Im z\le \frac{2^{n_j}}{N_j},\,\,
0\le \Re z-\frac{n_j^{2/3}s2^{n_j}}{N_j}\le \frac{2^{n_j}}{N_j}\Bigr\}.
$$
Let $\Lambda_j$ be such that
$\Omega_j\subset D(\Lambda_j,1/2)$.  Then we have 
$$
\log\prod_{w\in\Lambda_j}\frac1{\rho(z,w)}\ge Cn_j^{1/3},\qquad z\in E_j, 
$$
where
$$
E_j=\Bigl\{\frac {n_j^{2/3}2^{n_j}}{N_j}\le \Im z\le 
\frac{n_j^{2/3}2^{2n_j}}{N_j},\,\,0\le \Re z\le 
\frac{n_j^{2/3}2^{2n_j}}{N_j}\Bigr\}.
$$
Finally,
$$
\mathcal A(E_j)\asymp\frac{n_j^{5/3}}{N_j}\cdot 2^{2n_j}.
$$
If now 
$$
\sum_{j\ge 1}\frac{n_j}{N_j}\cdot 2^{2n_j}<\infty,
$$
and
$$
\frac{n_j^{5/3}}{N_j}\cdot 2^{2n_j}\to\infty,\qquad j\to\infty,
$$
then our function $S$ satisfies all the above-mentioned conditions.
\end{proof}

\end{document}